\begin{document}

\let\kappa=\varkappa
\let\epsilon=\varepsilon
\let\phi=\varphi
\let\p\partial
\let\lle=\preccurlyeq
\let\ulle=\curlyeqprec

\def\Z{\mathbb Z}
\def\R{\mathbb R}
\def\C{\mathbb C}
\def\Q{\mathbb Q}
\def\P{\mathbb P}
\def\HH{\mathsf{H}}
\def\XX{\mathcal X}

\def\conj{\overline}
\def\Beta{\mathrm{B}}
\def\const{\mathrm{const}}
\def\ov{\overline}
\def\wt{\widetilde}
\def\wh{\widehat}

\renewcommand{\Im}{\mathop{\mathrm{Im}}\nolimits}
\renewcommand{\Re}{\mathop{\mathrm{Re}}\nolimits}
\newcommand{\codim}{\mathop{\mathrm{codim}}\nolimits}
\newcommand{\Aut}{\mathop{\mathrm{Aut}}\nolimits}
\newcommand{\lk}{\mathop{\mathrm{lk}}\nolimits}
\newcommand{\sign}{\mathop{\mathrm{sign}}\nolimits}
\newcommand{\rk}{\mathop{\mathrm{rk}}\nolimits}

\def\id{\mathrm{id}}
\def\Leg{\mathrm{Leg}}
\def\Jet{{\mathcal J}}
\def\sS{{\mathcal S}}
\def\lcan{\lambda_{\mathrm{can}}}
\def\ocan{\omega_{\mathrm{can}}}
\def\bgamma{\boldsymbol{\gamma}}

\renewcommand{\mod}{\mathrel{\mathrm{mod}}}

\newtheorem{mainthm}{Theorem}
\renewcommand{\themainthm}{{\Alph{mainthm}}}
\newtheorem{thm}{Theorem}[section]
\newtheorem{lem}[thm]{Lemma}
\newtheorem{prop}[thm]{Proposition}
\newtheorem{cor}[thm]{Corollary}

\theoremstyle{definition}
\newtheorem{exm}[thm]{Example}
\newtheorem{rem}[thm]{Remark}
\newtheorem{df}[thm]{Definition}

\numberwithin{equation}{section}

\title{Redshift and contact forms}
\author[Chernov \& Nemirovski]{Vladimir Chernov and Stefan Nemirovski}
\thanks{This work was partially supported by a grant from the Simons Foundation (\#\,513272 to Vladimir Chernov).
The second author was partially supported by SFB/TRR~191 of the DFG and by RFBR grant \textnumero 17-01-00592-a}
\address{Department of Mathematics, 6188 Kemeny Hall,
Dartmouth College, Hanover, NH 03755-3551, USA}
\email{Vladimir.Chernov@dartmouth.edu}
\address{%
Steklov Mathematical Institute, Gubkina 8, 119991 Moscow, Russia;\hfill\break
\strut\hspace{8 true pt} Mathematisches Institut, Ruhr-Universit\"at Bochum, 44780 Bochum, Germany}
\email{stefan@mi.ras.ru}

\begin{abstract}
It is shown that the redshift between two Cauchy surfaces in a globally hyperbolic spacetime
equals the ratio of the associated contact forms on the space of light rays of that
spacetime.
\end{abstract}

\maketitle

\section{Introduction}
Let $X$ be a spacetime, that is, a connected time-oriented Lorentz manifold~\cite[\S 3.1]{BEE}.
The Lorentz scalar product on $X$ will be denoted by $\langle\text{ },\!\text{ }\rangle$
and assumed to have signature $(+,-,\dots,-)$ with $n\ge 2$ 
negative spatial dimensions.

Suppose that $n_E$ (`emitter') and $n_R$ (`receiver') are two infinitesimal observers,
i.e.\ future-pointing unit Lorentz length vectors, at events $E, R\in X$ connected by a null 
geodesic $\gamma$. Then the {\it photon redshift\/} $z=z(n_E,n_R,\gamma)$ from $n_E$ to $n_R$ 
along $\gamma$ is defined by the formula
$$
1+z = \frac{\langle n_E,\dot\gamma(E)\rangle}{\langle n_R,\dot\gamma(R)\rangle}
$$
for any affine parametrisation of $\gamma$.
In other words, $1+z$ is the ratio of the frequencies of any lightlike particle travelling 
along $\gamma$ measured by $n_E$ and $n_R$, see e.g.~\cite[Appendix 9A]{HEL} or \cite[p.~354]{ONeill}.
If $z>0$, such particles appear `redder' (having lower frequency) to $n_R$ than to $n_E$, whence the terminology.

Assume now that $X$ is globally hyperbolic~\cite{BeSa1,BeSa2} and consider its space of light rays $\mathfrak N_X$.
By definition, a point $\bgamma\in\mathfrak N_X$ is an equivalence class of inextendible
future-directed null geodesics up to an orientation preserving affine reparametrisation.

A seminal observation of Penrose and Low~\cite{PR2, Lo1, Lo2} is that the space $\mathfrak N_X$
has a canonical structure of a contact manifold (see also \cite{NT, KT, BIL}). 
A contact form $\alpha_M$ on $\mathfrak N_X$ 
defining that contact structure can be associated to any smooth spacelike Cauchy 
surface~$M\subset X$. Namely, consider the map 
$$
\iota_M:\mathfrak N_X \mathrel{\lhook\joinrel\longrightarrow} T^*M
$$
taking $\bgamma\in\mathfrak N_X$ represented by a null geodesic~$\gamma\subset X$
to the $1$-form on $M$ at the point $x=\gamma\cap M$ 
collinear to ${\langle\dot\gamma(x),\cdot\,\rangle|}_M$
and having unit length with respect to the induced Riemann metric on~$M$ 
(see formula~\eqref{normalise} below). 
This map identifies $\mathfrak N_X$ with the unit cosphere bundle ${\mathbb S}^*M$
of the Riemannian manifold $\left(M,- {\langle\text{ },\!\text{ }\rangle|}_M\right)$. 
Then 
$$
\alpha_M := \iota_M^*\,\lambda_{\mathrm{can}},
$$
where $\lambda_{\mathrm{can}}=\sum p_kdq^k$ is the canonical Liouville $1$-form on $T^*M$.

Contact forms defining the same contact structure are pointwise proportional. 
The purpose of the present note is to point out that the ratio of the contact forms 
on $\mathfrak N_X$ associated to different Cauchy surfaces in $X$ is given by 
the redshifts between infinitesimal observers having those Cauchy surfaces as their rest spaces.

\begin{df}
Let $M$ and $M'$ be spacelike Cauchy surfaces in~$X$. 
The redshift from $M$ to $M'$ along $\bgamma\in{\mathfrak N}_X$ is defined by
$$
z(M,M',\bgamma):= z\bigl(n_M(x), n_{M'}(x'), \gamma\bigr),
$$
where $\gamma$ is any inextendible null geodesic representing $\bgamma$, 
$x=\gamma\cap M$, $x'=\gamma\cap M'$, and $n_M$ and $n_{M'}$ are the future pointing normal unit 
vector fields on $M$ and~$M'$.
\end{df}

\begin{thm}
\label{main}
Let $M$ and $M'$ be spacelike Cauchy surfaces in~$X$.
For every $\bgamma\in \mathfrak N_X$,  we have 
$$
\frac{\alpha_{M'}}{\alpha_{M}}(\bgamma) =
1 + z(M, M', \bgamma).
$$
\end{thm}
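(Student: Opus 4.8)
The plan is to evaluate both contact forms at an arbitrary tangent vector $\xi\in T_{\bgamma}\mathfrak N_X$ and to reduce the computation to a single Jacobi field along a null geodesic representing $\bgamma$.

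First I would make $\iota_M$ completely explicit. Fix an inextendible, future-directed null geodesic $\gamma$ representing $\bgamma$, with an affine parametrisation so that $\dot\gamma$ is future-pointing, and put $x=\gamma\cap M$. Decomposing $\dot\gamma(x)=\langle n_M(x),\dot\gamma(x)\rangle\,n_M(x)+v$ into its $n_M$-component and its tangential part $v\in T_xM$, one gets $\langle\dot\gamma(x),\cdot\,\rangle|_{T_xM}=\langle v,\cdot\,\rangle|_{T_xM}$, and since $\gamma$ is null this $1$-form has Riemannian norm $\sqrt{-\langle v,v\rangle}=\langle n_M(x),\dot\gamma(x)\rangle>0$. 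Hence, up to the sign fixed in~\eqref{normalise} — which is dictated by the time orientation, hence is the same for every Cauchy surface and cancels in the ratio below —
$$
\iota_M(\bgamma)=\frac{1}{\langle n_M(x),\dot\gamma(x)\rangle}\;\langle\dot\gamma(x),\cdot\,\rangle|_{T_xM}.
$$
Unwinding the definition of the Liouville form, $\alpha_M(\xi)=(\iota_M^*\lcan)(\xi)=\iota_M(\bgamma)\bigl((p_M)_*\xi\bigr)$, where $p_M=\pi\circ\iota_M\colon\mathfrak N_X\to M$ is the intersection map $\bgamma\mapsto\gamma\cap M$. Combining the two displays,
$$
\alpha_M(\xi)=\frac{\langle\dot\gamma(x),\,(p_M)_*\xi\rangle}{\langle n_M(x),\dot\gamma(x)\rangle},
$$
with the ambient Lorentz product in the numerator; this expression is manifestly independent of the chosen affine parametrisation.

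The heart of the matter is to identify the numerators attached to $M$ and to $M'$. Represent $\xi$ by a smooth variation $\gamma_s$ of affinely parametrised null geodesics with $\gamma_0=\gamma$, and let $J=\partial_s\gamma_s|_{s=0}$ be the associated Jacobi field along $\gamma$. Writing $\gamma_s\cap M=\gamma_s(t_M(s))$ with $t_M$ smooth — here one uses that a spacelike Cauchy surface is met exactly once, and transversally, by each null geodesic, so that the implicit function theorem applies — the chain rule gives $(p_M)_*\xi=J(x)+t_M'(0)\,\dot\gamma(x)$, whence $\langle\dot\gamma(x),(p_M)_*\xi\rangle=\langle\dot\gamma(x),J(x)\rangle$ since $\dot\gamma(x)$ is null; likewise $\langle\dot\gamma(x'),(p_{M'})_*\xi\rangle=\langle\dot\gamma(x'),J(x')\rangle$ at $x'=\gamma\cap M'$. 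A standard differentiation — using $\nabla_{\dot\gamma}J=\nabla_J\dot\gamma$, $\nabla_{\dot\gamma}\dot\gamma=0$ and $\langle\dot\gamma_s,\dot\gamma_s\rangle\equiv0$ — shows that $t\mapsto\langle\dot\gamma(t),J(t)\rangle$ is constant along $\gamma$; call this constant $\mu(\xi)$, noting that it depends only on $\xi$ because replacing the variation or reparametrising changes $J$ only by a multiple of $\dot\gamma$. Consequently $\alpha_M(\xi)=\mu(\xi)/\langle n_M(x),\dot\gamma(x)\rangle$ and $\alpha_{M'}(\xi)=\mu(\xi)/\langle n_{M'}(x'),\dot\gamma(x')\rangle$ for every $\xi$, so
$$
\frac{\alpha_{M'}(\xi)}{\alpha_{M}(\xi)}=\frac{\langle n_M(x),\dot\gamma(x)\rangle}{\langle n_{M'}(x'),\dot\gamma(x')\rangle}=1+z\bigl(n_M(x),n_{M'}(x'),\gamma\bigr)=1+z(M,M',\bgamma)
$$
directly from the definition of the redshift. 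Since $\alpha_M$ is nowhere vanishing, this also gives $(\alpha_{M'})_{\bgamma}=\bigl(1+z(M,M',\bgamma)\bigr)(\alpha_{M})_{\bgamma}$ as covectors, which is the assertion.

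The step I expect to require the most care is not any single computation but the bookkeeping forced by the unparametrised nature of $\mathfrak N_X$: one must check that an abstract $\xi\in T_{\bgamma}\mathfrak N_X$ is realised by a variation through \emph{affinely parametrised} null geodesics, that $t_M(s)$ is a legitimate smooth function, and that the quantities $\langle\dot\gamma,J\rangle$ and $(p_M)_*\xi$ read off from such a variation are well defined and consistent with the two displays for $\alpha_M$. Once this is in place, what remains is elementary linear algebra together with the textbook fact that $\langle\dot\gamma,J\rangle$ is a first integral of a null Jacobi field (cf.~\cite{ONeill}).
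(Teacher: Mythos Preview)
Your argument is correct and follows essentially the same route as the paper: write $\iota_M(\bgamma)$ explicitly as $\langle\dot\gamma(x),\cdot\rangle|_M/\langle\dot\gamma(x),n_M(x)\rangle$, evaluate $\alpha_M(\xi)$ as $\langle\dot\gamma(x),J(x)\rangle/\langle\dot\gamma(x),n_M(x)\rangle$ via a null geodesic variation and the nullity of $\dot\gamma$, and then invoke the constancy of $\langle\dot\gamma,J\rangle$ along $\gamma$ (the paper isolates this as a separate lemma with the same torsion-free computation you sketch). Your extra remarks on signs, smoothness of $t_M(s)$, and well-definedness of $\mu(\xi)$ are points the paper treats implicitly, not departures from its method.
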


\begin{rem}
The theorem remains true for {\it partial\/} Cauchy surfaces, i.e.\ locally
closed acausal spacelike hypersurfaces $M,M'\subset X$, and for 
$\bgamma\in\mathfrak N_X$ corresponding to null geodesics
intersecting both $M$ and~$M'$.
\end{rem}

\begin{rem}
If $M$ and $M'$ are Cauchy surfaces through a point $x\in X$ such that $n_M(x)=n_{M'}(x)$,
then the theorem shows that the contact forms $\alpha_M$ and $\alpha_{M'}$ coincide on
the tangent spaces to $\mathfrak N_X$ at all points corresponding to null geodesics
passing through~$x$. In other words, an infinitesimal observer at an event $x$ defines
a contact form on $T{\mathfrak N}_X$ restricted to the sky~${\mathfrak S}_x\subset{\mathfrak N}_X$.
\end{rem}

The contact geometry of $\mathfrak N_X$ was previously used to recover the causal 
or, equivalently~\cite{Ma}, conformal structure of $X$, see~\cite{Lo2, NT, ChRu, ChNe1, ChNe2, ChNe3}.
Theorem~\ref{main} should make it possible to apply techniques from contact geometry 
to study the metric structure of a globally hyperbolic spacetime. 
A token application to the comparison of Liouville and Riemannian volumes 
on different Cauchy surfaces is given in \S\ref{SectVolume} below.

\section{Proof of Theorem~\ref{main}}
The key fact is the following basic property of vector fields tangent to variations
of pseudo-Riemannian geodesics by curves of the same speed. For Jacobi fields tangent
to families of null geodesics in Lorentz manifolds, this computation appears 
in~\cite[p.~176]{PR2}, \cite[pp.~252--253]{NT}, and~\cite[pp.~10--11]{BIL}.

\begin{lem} 
\label{constant}
Let $\gamma_s:(a,b)\to X$, $0\le s <\epsilon$, be a one-parameter family of 
curves in a pseudo-Riemannian manifold $(X,\langle\text{ },\!\text{ }\rangle)$
such that $\gamma_0$ is a geodesic and $\langle \dot\gamma_s, \dot\gamma_s\rangle$ is independent of~$s$. 
If
$$
J(t) := \left.\frac{d}{ds}\right|_{s=0} \gamma_s(t)
$$
is the vector field along $\gamma_0$ tangent to this family, then 
$$
\langle \dot\gamma_0(t), J(t) \rangle = \mathrm{const.}
$$
\end{lem}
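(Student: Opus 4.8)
The plan is to show that the function $t\mapsto\langle\dot\gamma_0(t),J(t)\rangle$ has vanishing derivative. Differentiating along $\gamma_0$ and using that the Levi-Civita connection is metric and torsion-free, I would write
$$
\frac{d}{dt}\langle\dot\gamma_0,J\rangle = \langle\tfrac{D}{dt}\dot\gamma_0,J\rangle + \langle\dot\gamma_0,\tfrac{D}{dt}J\rangle,
$$
where $\tfrac{D}{dt}$ denotes covariant differentiation along $\gamma_0$. The first term vanishes because $\gamma_0$ is a geodesic, so $\tfrac{D}{dt}\dot\gamma_0=0$. For the second term, the standard symmetry of the second derivatives of a two-parameter map (equivalently, torsion-freeness) gives $\tfrac{D}{dt}J=\tfrac{D}{dt}\tfrac{\p}{\p s}\gamma_s\big|_{s=0}=\tfrac{D}{ds}\dot\gamma_s\big|_{s=0}$, so that
$$
\langle\dot\gamma_0,\tfrac{D}{dt}J\rangle = \langle\dot\gamma_s,\tfrac{D}{ds}\dot\gamma_s\rangle\big|_{s=0} = \tfrac12\left.\frac{\p}{\p s}\right|_{s=0}\langle\dot\gamma_s,\dot\gamma_s\rangle.
$$

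The hypothesis that $\langle\dot\gamma_s,\dot\gamma_s\rangle$ is independent of $s$ makes this last expression vanish, and hence $\tfrac{d}{dt}\langle\dot\gamma_0,J\rangle\equiv 0$, which is the claim.

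The only point requiring a little care — and the main (mild) obstacle — is the justification of the commutation $\tfrac{D}{dt}\tfrac{\p}{\p s}\gamma = \tfrac{D}{ds}\tfrac{\p}{\p t}\gamma$ for the map $(s,t)\mapsto\gamma_s(t)$; this is the symmetry lemma for pullback connections and holds because the Levi-Civita connection is torsion-free. Everything else is a one-line computation, and no nondegeneracy of the metric is used, so the argument applies verbatim in the pseudo-Riemannian (in particular Lorentzian) setting and to families of null geodesics, where $\langle\dot\gamma_s,\dot\gamma_s\rangle\equiv 0$.
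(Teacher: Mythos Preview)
Your proof is correct and follows essentially the same argument as the paper: differentiate $\langle\dot\gamma_0,J\rangle$ in $t$, kill the first term by the geodesic equation, swap $\tfrac{D}{dt}$ and $\tfrac{D}{ds}$ using torsion-freeness, and kill the remaining term via the hypothesis that $\langle\dot\gamma_s,\dot\gamma_s\rangle$ is independent of $s$. The paper likewise singles out the torsion-free commutation as the one point deserving comment.
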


\begin{proof} 
Let us show that the $t$-derivative of this scalar product is zero. Indeed,
\begin{equation}
\label{derive}
\frac{d}{dt} \langle\dot\gamma_0(t), J(t)\rangle = \langle\nabla_t \dot\gamma_0(t), J(t)\rangle + \langle\dot\gamma_0(t), \nabla_t J(t)\rangle,
\end{equation}
where $\nabla$ is the pull-back of the Levi-Civita connection of the pseudo-Riemannian metric on~$X$
to $(a,b)\times [0,\epsilon)$ by the map $(t,s)\mapsto \gamma_s(t)$.
The first term on the right hand side vanishes because the tangent vector
of a geodesic is parallel along the geodesic. Note further
that
$$
\nabla_t \frac{\partial}{\partial s} = \nabla_s \frac{\partial}{\partial t}
$$
since the Levi-Civita connection has no torsion and $\left[\frac{\partial}{\partial s},\frac{\partial}{\partial t}\right]=0$
(see also~\cite[Proposition 4.44(1)]{ONeill}). Hence, the right hand side of~\eqref{derive} is equal to
$$ 
{\langle \dot\gamma_0(t), \nabla_s \dot\gamma_s(t)\Big{|}}_{s=0}\rangle  
= \frac{1}{2}\left.\frac{d}{ds}\right|_{s=0}\langle\dot\gamma_s,\dot\gamma_s\rangle = 0
$$
because the speed of $\gamma_s$ is independent of $s$ by assumption.
\end{proof}

\begin{rem}
The relevance of torsion in this context is 
pointed out in the footnote on p.~184 of~\cite{PR2}.
\end{rem}

Let now $M$ be a smooth spacelike Cauchy surface in a spacetime~$X$ and 
$\gamma$ an inextendible future-directed null geodesic in $X$ intersecting $M$
at the (unique) point $x=\gamma\cap M$. Then
\begin{equation}
\label{normalise}
\iota_M(\bgamma) = \frac{{\langle \dot\gamma(x),\cdot\,\rangle|}_M}{\langle \dot\gamma(x), n_M(x)\rangle}\, ,
\end{equation}
where $n_M$ is the future-pointing unit normal vector field on $M$ 
and $\bgamma\in\mathfrak N_X$ is the equivalence class of~$\gamma$.
Indeed, since $\langle \dot\gamma(x),\cdot\,\rangle$ is a null covector,
the Riemannian length of its restriction to $T_xM$ is equal
to the Riemannian length of its restriction to the Lorentz normal direction,
which is precisely $\langle \dot\gamma(x), n_M(x)\rangle (>0)$.

Thus, if $\mathbf{v}\in T_{\bgamma}\mathfrak N_X$ and $v={(\iota_M)}_*\mathbf{v}$,
then 
\begin{equation}
\label{evaluate}
\alpha_M(\mathbf{v}) = \lambda_\mathrm{can}(v) =
\frac{{\langle \dot\gamma(x),{(\pi_M)}_* v\rangle}}{\langle \dot\gamma(x), n_M(x)\rangle}
\end{equation}
by the definition of the canonical $1$-form $\lambda_\mathrm{can}$ and formula~\eqref{normalise}, 
where $\pi_M:T^*M\to M$ denotes the bundle projection.

Suppose that $\gamma_s:(a,b)\to X$, $s\in [0,\epsilon)$, is a family of null geodesics 
intersecting $M$ such that the maximal extension of $\gamma_0$ is $\gamma$ and the 
corresponding curve in $\mathfrak N_X$ has tangent vector $\mathbf{v}$ at $\bgamma$ 
or, equivalently, ${\left.\frac{d}{ds}\right|}_{s=0}\iota_M(\bgamma_s) = v$. 
Let $x(s)=\gamma_s\cap M$ so that $x(0)=x$. Then 
$$
{(\pi_M)}_* v = {\left.\frac{d}{ds}\right|}_{s=0} x(s)
$$ 
because $x(s)=\pi_M\circ\iota_M(\bgamma_s)$ by the definition of $\iota_M$. Hence, 
$$
{(\pi_M)}_* v = J(x) + \tau'(0)\dot\gamma(x),
$$
where $J = \left.\frac{d}{ds}\right|_{s=0} \gamma_s$ is the Jacobi vector field
along $\gamma_0$ tangent to the family $\gamma_s$ and $\tau=\tau(s)$ is the
function defined by $\gamma_s(\tau(s))=x(s)$.
Since $\dot\gamma(x)$ is null, 
it follows that
\begin{equation}
\label{Jacobi}
{\langle \dot\gamma(x),{(\pi_M)}_* v\rangle} = {\langle \dot\gamma(x),J(x)\rangle}.
\end{equation}

If $M'$ is another Cauchy surface and $x'=\gamma\cap M'$, we may choose $(a,b)\subseteq\R$
so that $\gamma(a,b)\ni x, x'$ and a family $\gamma_s$ as above exists on~$(a,b)$.
By formulas~\eqref{evaluate} and~\eqref{Jacobi}, we obtain
$$
\alpha_M(\mathbf{v}) = \frac{\langle \dot\gamma(x),J(x)\rangle}{\langle \dot\gamma(x), n_M(x)\rangle}
\qquad\text{and}\qquad
\alpha_{M'}(\mathbf{v}) = \frac{\langle \dot\gamma(x'),J(x')\rangle}{\langle \dot\gamma(x'), n_{M'}(x')\rangle}\, .
$$
However, 
$$
\langle \dot\gamma(x),J(x)\rangle = \langle \dot\gamma(x'),J(x')\rangle
$$
by Lemma~\ref{constant} and therefore
$$
\frac{\alpha_{M'}(\mathbf{v})}{\alpha_M(\mathbf{v})} = \frac{\langle \dot\gamma(x), n_M(x)\rangle}{\langle \dot\gamma(x'), n_{M'}(x')\rangle}
= 1 + z\bigl(n_M(x), n_{M'}(x'), \gamma\bigr),
$$
which proves Theorem~\ref{main}.

\begin{rem}
The proof shows that the ratio $\frac{\alpha_{M'}(\mathbf{v})}{\alpha_M(\mathbf{v})}$, where $\mathbf{v}$ is a tangent vector to $\mathfrak N_X$ 
at a point $\bgamma\in \mathfrak N_X$, is a positive function depending only on~$\bgamma$. Thus, the contact forms on 
$\mathfrak N_X$ associated to different Cauchy surfaces in~$X$ define the same co-oriented contact structure indeed. 
This contact structure can also be described as the pull-back of the canonical contact structure on the spherical cotangent bundle $ST^*M$ 
of a Cauchy surface $M$ by the map $\rho_M = s_M\circ\iota_M$, where $s_M: T^*M-\{\text{zero section}\}\to ST^*M$ is the projection to the spherisation, 
see~\cite[pp.~252--253]{NT} and~\cite[\S 4]{ChNe1}.
\end{rem}

\section{Liouville measure and Riemannian volume}
\label{SectVolume}
Let $M$ and $M'$ be two spacelike Cauchy surfaces in a globally hyperbolic
spacetime $(X, \langle\text{ },\!\text{ }\rangle)$ and consider 
the contact forms $\alpha_M=\iota_M^*\lambda_{\mathrm{can}}$ and $\alpha_{M'}=\iota_{M'}^*\lambda_{\mathrm{can}}$
on ${\mathfrak N}_X$ associated to $M$ and $M'$.
Then 
$$
\alpha_{M} = \bigl(1+z(M,M',\bgamma)\bigr)^{-1}\alpha_{M'}
$$
by Theorem~\ref{main} and therefore
\begin{equation}
\label{VolumeNullGeo}
\alpha_M\wedge\left(d\alpha_M\right)^{n-1} = \bigl(1+z(M,M',\bgamma)\bigr)^{-n}\alpha_{M'}\wedge\left(d\alpha_{M'}\right)^{n-1}
\end{equation}
because $\alpha\wedge\alpha=0$ and $d(f\alpha)=fd\alpha + df\wedge\alpha$ for any function~$f$ and 1-form~$\alpha$.

Recall that the {\it Liouville measure\/} on the unit cosphere bundle of a Riemannian manifold is defined 
by the non-vanishing $(2n-1)$-form
$$
\Omega:=\lambda_{\mathrm{can}}\wedge\left( d\lambda_{\mathrm{can}}\right)^{n-1}.
$$
Thus, formula~\eqref{VolumeNullGeo} may be viewed as a general volume--redshift relation (cf.~\cite[\S14.12 and \S15.9]{HEL})
for the Liouville measures on the unit cosphere bundles of $M$ and $M'$ with respect to the Riemann metrics
$-{\langle\text{ },\!\text{ }\rangle|}_M$ and $-{\langle\text{ },\!\text{ }\rangle|}_{M'}$. 
Indeed, let
\begin{equation}
\label{LightRaysMap}
\iota_{M'M} = \iota_M\circ(\iota_{M'})^{-1} : {\mathbb S}^*M'\overset{\cong}{\longrightarrow} {\mathbb S}^*M
\end{equation}
be the map identifying the unit covectors corresponding to the same null geodesic
at its intersection points with $M$ and $M'$. Then \eqref{VolumeNullGeo} shows that
\begin{equation}
\label{LiouvilleRedshift}
(\iota_{M'M})^*\Omega_M = \bigl(1+z(M,M',\bgamma)\bigr)^{-n}\Omega_{M'}
\end{equation}
at $\iota_{M'}(\bgamma)\in {\mathbb S}^*M'$.

Let $\mathfrak{L}\subseteq\mathfrak{N}_X$ be a (Borel) subset of the space of light rays and denote by $\mathfrak{L}_x$
the set of null geodesics from $\mathfrak{L}$ passing through a point~$x\in X$. Integrating~\eqref{LiouvilleRedshift} 
over $\iota_{M'}(\mathfrak{L})$, we obtain that
\begin{equation}
\label{LiouvilleRedshiftInt}
\int\limits_{\iota_M(\mathfrak{L})} \Omega_M 
= \int\limits_{\iota_{M'}(\mathfrak{L})} \bigl(1+z(M,M',\bgamma)\bigr)^{-n} \Omega_{M'}.
\end{equation}
The Liouville measure is locally the product of the Riemann measure on the base manifold and the surface area measure
on the unit sphere in the standard Euclidean space~$\R^n$, see \cite[\S5.2]{B} or \cite[Theorem~VII.1.3]{Ch}. 
Therefore both integrals in~\eqref{LiouvilleRedshiftInt} can be converted to double integrals.
Applying this to the left hand side first, we see that
$$
\int\limits_{\iota_M(\mathfrak{L})} \Omega_M = \int\limits_{M} dV_M(x) \int\limits_{\iota_M(\mathfrak{L}_x)} d\omega_x
= \int\limits_{M} \omega_M(x,\mathfrak{L})\, dV_M(x),
$$
where $dV_M$ is the Riemann measure on $M$, $d\omega_x$ is the surface area measure on the fibre $\mathbb{S}_x^*M$, 
and 
$$
\omega_M(x,\mathfrak{L}) := \int\limits_{\iota_M(\mathfrak{L}_x)} d\omega_x
$$ 
is the area of the set $\iota_M(\mathfrak{L}_x)$ of unit covectors at $x\in M$ 
corresponding to null geodesics from~$\mathfrak{L}$, i.e.\
the solid angle spanned by the light rays from $\mathfrak{L}$ at $x\in M$. 
Now \eqref{LiouvilleRedshiftInt} takes the form 
\begin{equation}
\label{LiouvilleRedshiftFibreInt}
\int\limits_{M} \omega_M(x,\mathfrak{L})\, dV_M(x) = 
\int\limits_{M'} dV_{M'}(x') \int\limits_{\iota_{M'}(\mathfrak{L}_{x'})} \bigl(1+z(M,M',\bgamma)\bigr)^{-n} d\omega_{x'}.
\end{equation}

\begin{exm}
\label{ExConst}
Assume that the redshift $z(M,M',\bgamma)= z$ is the same for all $\bgamma\in\mathfrak L$. Then \eqref{LiouvilleRedshiftFibreInt}
simplifies to
$$
\int\limits_{M} \omega_M(x,\mathfrak{L})\, dV_M(x)  = \frac{1}{(1+z)^n}\int\limits_{M'} \omega_{M'}(x',\mathfrak{L})\, dV_{M'}(x'). 
$$
\end{exm}

\begin{exm}
\label{AllRays}
Let $\mathfrak{L}=\mathfrak{N}_X$ be the set of all light rays. Then
$$
\iota_M(\mathfrak{L}_x) = \mathbb{S}^*_xM
$$
for every Cauchy surface $M$ and every point~$x\in M$. Hence,
$$
\omega_M(x,\mathfrak{L}) = \mathfrak{c}_n,
$$
where $\mathfrak{c}_n$ is the area of the standard unit sphere in~$\R^n$.
Therefore \eqref{LiouvilleRedshiftFibreInt} implies
$$
\mathfrak{c}_n \mathrm{Vol}(M) = 
\int\limits_{M'} dV_{M'}(x') \int\limits_{\mathbb{S}^*_{x'}M'} \bigl(1+z(M,M',\bgamma)\bigr)^{-n} d\omega_{x'}.
$$
If the redshift is constant as in Example~\ref{ExConst}, it follows that
$$
\mathrm{Vol}(M) = \frac{1}{(1+z)^n} \mathrm{Vol}(M').
$$
More generally, if $\underline{z}\le z(M,M',\bgamma) \le \overline{z}$, then
$$
\frac{1}{(1+\overline{z})^n} \mathrm{Vol}(M') \le \mathrm{Vol}(M)\le \frac{1}{(1+\underline{z})^n} \mathrm{Vol}(M').
$$
\end{exm}

\begin{exm}
\label{VolumeEx}
Consider a subset $D\subseteq M$ and let
$$
\mathfrak{L}^D = \{\bgamma\in\mathfrak{N}_X\mid \gamma\cap D\ne\varnothing\}
$$
be the set of all light rays passing through $D$. Then
$$
\iota_M(\mathfrak{L}_x^D) = 
\begin{cases} 
\mathbb{S}^*_xM, & x\in D,\\
\varnothing, & x\in M\setminus D,
\end{cases}
$$
and therefore
$$
\omega_M(x,\mathfrak{L}^D) = 
\begin{cases} 
\mathfrak{c}_n, & x\in D,\\
0, & x\in M\setminus D.
\end{cases}
$$
Hence, \eqref{LiouvilleRedshiftFibreInt}
gives the following expressions for the volume of $D$ in~$M$:
\begin{align}
\label{VolumeRedshift}
\mathrm{Vol}_{M}(D) &
=\frac{1}{\mathfrak{c}_n}
\int\limits_{M'} dV_{M'}(x') \int\limits_{\iota_{M'}(\mathfrak{L}^D_{x'})} \bigl(1+z(M,M',\bgamma)\bigr)^{-n} d\omega_{x'}\\
& = \frac{1}{\mathfrak{c}_n} \int\limits_{\{\iota_{M'}(\bgamma) \mid\, \gamma\cap D\neq\varnothing\}} 
\bigl(1+z(M,M',\bgamma)\bigr)^{-n}\Omega_{M'}.
\end{align}
Thus, the volume of $D\subseteq M$ can be computed by integrating the redshift factor $\bigl(1+z(M,M',\bgamma)\bigr)^{-n}$ 
with respect to the Liouville measure on $\mathbb{S}^*M'$ over the subset of all unit covectors on $M'$ corresponding to light 
rays $\bgamma$ passing through~$D$. For constant redshift, \eqref{VolumeRedshift} reduces to
$$
\mathrm{Vol}_{M}(D) = \frac{1}{\mathfrak{c}_n (1+z)^n}\int\limits_{M'} \omega_{M'}(x',\mathfrak{L}^D)\, dV_{M'}(x').
$$
Note that if $M$ lies in the past of~$M'$, then $\omega_{M'}(x',\mathfrak{L}^D)$ may be interpreted 
as the solid angle at $x'\in M'$ subtended by $D\subseteq M$.
\end{exm}

\begin{figure}[htbp]
\begin{center}
\includegraphics[scale=0.6]{./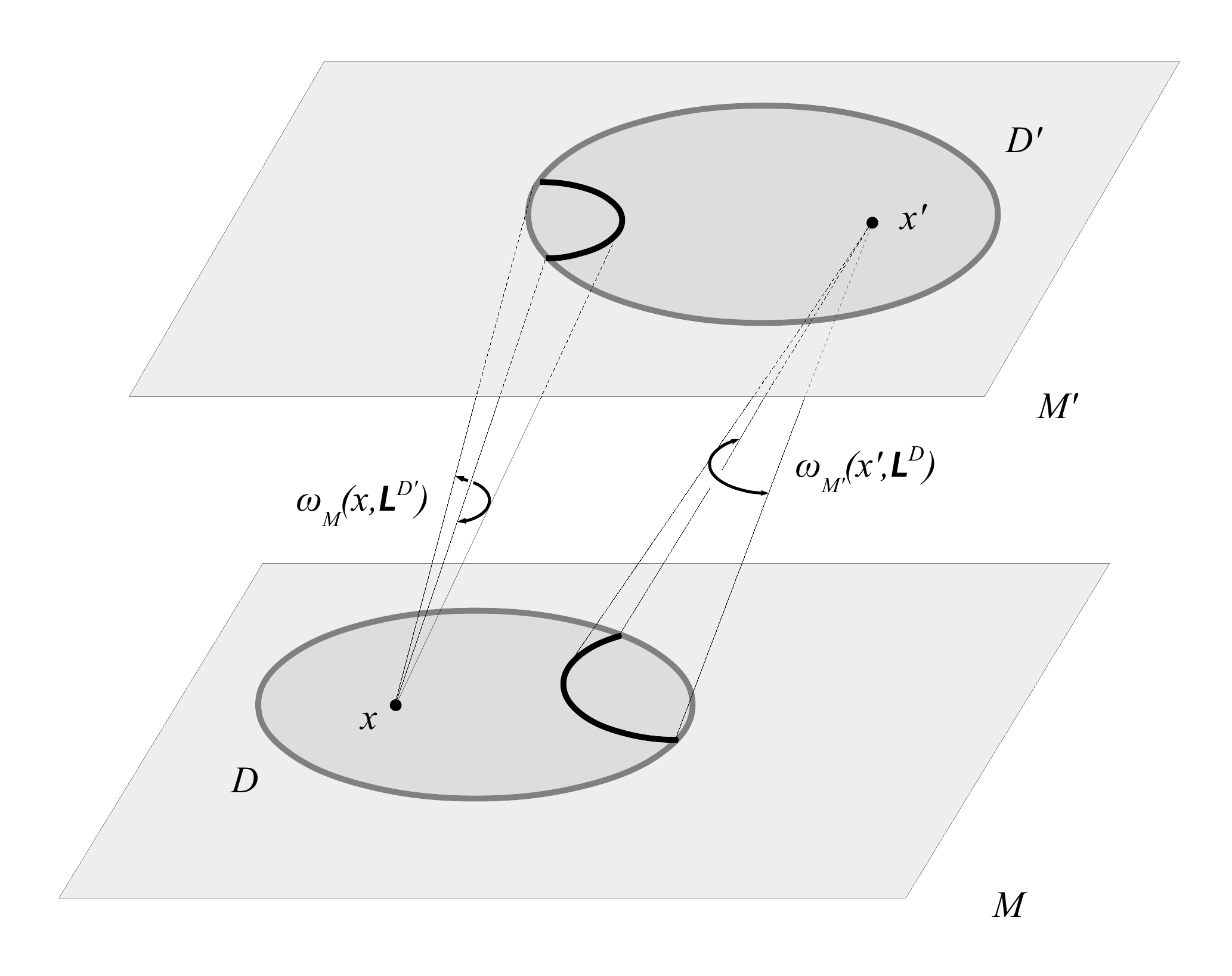}
\end{center}
\caption{Cauchy surfaces and light rays ($n=2$).} 
\label{angles}
\end{figure}

\begin{exm} Let now
$$
\mathfrak{L}^{DD'}:= \mathfrak{L}^D \cap \mathfrak{L}^{D'} =  \{\bgamma\in\mathfrak{N}_X\mid \gamma\cap D\ne\varnothing, \gamma\cap D'\ne\varnothing\}
$$
be the set of all light rays intersecting $D\subseteq M$ and $D'\subseteq M'$. 
(Example~\ref{VolumeEx} is a special case of this situation with $D'=M'$.)
Then
$$
\iota_M(\mathfrak{L}_x^{DD'}) = 
\begin{cases} 
\iota_M(\mathfrak{L}_x^{D'}), & x\in D,\\
\varnothing, & x\in M\setminus D,
\end{cases}
$$
and similarly
$$
\iota_{M'}(\mathfrak{L}_{x'}^{DD'}) = 
\begin{cases} 
\iota_{M'}(\mathfrak{L}_{x'}^{D}), & x'\in D',\\
\varnothing, & x'\in M'\setminus D'.
\end{cases}
$$
Hence, it follows from~\eqref{LiouvilleRedshiftFibreInt} that
$$
\int\limits_{D} \omega_M(x,\mathfrak{L}^{D'})\, dV_M(x) = 
\int\limits_{D'} dV_{M'}(x') \int\limits_{\iota_{M'}(\mathfrak{L}^{D}_{x'})} \bigl(1+z(M,M',\bgamma)\bigr)^{-n} d\omega_{x'}.
$$
In the case of constant redshift~$z$, we obtain
$$
\int\limits_{D} \omega_M(x,\mathfrak{L}^{D'})\, dV_M(x)
= \frac{1}{(1+z)^n}\int\limits_{D'} \omega_{M'}(x',\mathfrak{L}^{D})\, dV_{M'}(x').
$$
If $M$ is in the past of $M'$, then $\omega_{M'}(x',\mathfrak{L}^{D})$ is the solid
angle subtended by $D$ at $x'$ as in Example~\ref{VolumeEx} and $\omega_M(x,\mathfrak{L}^{D'})$
is the solid angle at $x\in M$ spanned by rays emitted from $x$ and received in~$D'$,
see Fig.~\ref{angles}.
\end{exm}

\end{document}